\documentclass[pdflatex,sn-mathphys-num]{sn-jnl}

\usepackage{graphicx}%
\usepackage{multirow}%
\usepackage{amsmath,amssymb,amsfonts}%
\usepackage{amsthm}%
\usepackage{mathrsfs}%
\usepackage[title]{appendix}%
\usepackage{xcolor}%
\usepackage{textcomp}%
\usepackage{manyfoot}%
\usepackage{booktabs}%
\usepackage{algorithm}%
\usepackage{algorithmicx}%
\usepackage{algpseudocode}%
\usepackage{listings}%
\usepackage{url}
\theoremstyle{thmstyleone}%
\theoremstyle{thmstyletwo}%
\theoremstyle{thmstylethree}%
 \newtheorem{thm}{Theorem}[section]

 \theoremstyle{definition}
 
 \theoremstyle{remark}

 \numberwithin{equation}{section}
\begin{document}
\title[Equivalence of the minimality Conditions ]{Equivalence of the minimality conditions for the root functions of Sturm-Liouville problems with a boundary condition depending linearly on an eigenparameter}
\author[1]{\fnm{Yagub} \sur{Aliyev}}\email{yaliyev@ada.edu.az}

\author*[2]{\fnm{Narmin} \sur{Aliyeva}}\email{nermin.aliyeva@idrak.edu.az}
\equalcont{These authors contributed equally to this work.}

\affil*[1]{\orgdiv{School of IT and Engineering}, \orgname{ADA University}, \orgaddress{\street{61 Ahmadbay Agha-Oglu Street}, \city{Baku }, \postcode{AZ1008}, \country{Azerbaijan}}}

\affil[2]{\orgdiv{Department of Differential Equations and Control Theory}, \orgname{Baku State University}, \orgaddress{\street{Academic Zahid Khalilov str. 23}, \city{Baku}, \postcode{AZ1148}, \country{Azerbaijan}}}


\abstract{We study the minimality of the system of root functions associated with a Sturm--Liouville problem whose boundary condition depends linearly on the eigenparameter. Two different criteria for minimality were previously obtained using independent approaches. In this paper, we establish the equivalence of these criteria and provide a unified characterization of the exceptional cases in which the removal of certain associated functions fails to preserve minimality. The theoretical results are illustrated by several examples involving multiple eigenvalues, demonstrating the consistency of the two approaches and clarifying the structure of the corresponding root function systems.}
\keywords{Sturm–Liouville problem, eigenparameter-dependent boundary conditions, characteristic function, root functions, associated functions, biorthogonal system, minimality.\\
\textbf{MSC Classification:} {34B24; 34L10}.}

\maketitle
\section{Introduction}

We consider the following Sturm-Liouville problem
\begin{equation}
  -y^{\prime \prime }+q(x)y=\lambda y,\ 0<x<1,  \label{(1.1)} 
\end{equation}
\begin{equation}
y(0)\cos \beta =y^{\prime }(0)\sin \beta , ,
\label{(1.2)}
\end{equation}
\begin{equation}
(a\lambda + b)y(1) = (c\lambda + d)y{'}(1). \label{(1.3)}
\end{equation}
Here $a,b,c,d$ are reals, $\ 0\leq \beta <\pi$  and $ad-bc<0$. We assume that $q(x)$ is a real-valued continuous function on the interval $[0,1]$. In our previous papers \cite{aliyev5} and \cite{aliyev6} we developed two different approaches for the minimality and basis properties of the root functions of this problem. In the current paper, we will compare  these two approaches and provide some new examples.  

Problems similar to (\ref{(1.1)})--(\ref{(1.3)}) were  studied in \cite{fulton, aliyev1, zaliyev1, kerimov, mois}. A more general polynomial dependence on $\lambda$ was investigated in \cite{rus} and \cite{shkalikov1}. In a recent paper \cite{shkalikov2}, Theorem 3, it was mentioned that for some choices of the removed associated functions the system of root functions sometimes does not form a minimal system. 
It was further shown in \cite{shkalikov2} that, for an eigenvalue $\lambda_0$ of multiplicity $2$, the system of root functions obtained by removing the eigenfunction $y_0$ is not necessarily minimal in $L_2(0,1)$, whereas the system obtained by removing the associated function $y_1$ is always minimal. Moreover, if $\lambda_0$ is an eigenvalue of multiplicity $3$, then the systems obtained by removing either the eigenfunction $y_0$ or the associated function $y_1$ are not necessarily minimal in $L_2(0,1)$, whereas the system obtained by removing the associated function $y_2$ is always minimal. In \cite{aliyev5} the necessary and sufficient conditions for the minimality were written in the forms $\mathfrak{A}(y_{1}^{*}) = 0$, $\mathfrak{A}(y_{1}^{\#}) = 0$, and $\mathfrak{A}(y_{2}^{\#}) = 0$.
Furthermore, in \cite{shkalikov2}, it was mentioned that these cases are exceptional. Note that in \cite{shkalikov2}, instead of \eqref{(1.2)}, a more specialized first boundary condition $y'(0)=0$ was assumed. In \cite{aliyev6}, these exceptional cases were identified by the equalities $C=-\frac{\omega^{\prime\prime\prime}(\lambda_0)}{3\omega^{\prime\prime}(\lambda_0)}$, $C=-\frac{\omega^{\prime\prime\prime}(\lambda_0)}{3\omega^{\prime\prime}(\lambda_0)}$, and 
$$
 D=C^2+
\frac{\omega^{IV}(\lambda_0)}{4\omega^{\prime\prime\prime}(\lambda_0)}\left(C+\frac{\omega^{IV}(\lambda_0)}{4\omega^{\prime\prime\prime}(\lambda_0)}\right)-\frac{\omega^{V}(\lambda_0)}{20\omega^{\prime\prime\prime}(\lambda_0)}. 
$$
In the current paper, we will prove the equivalence of the approaches in \cite{aliyev5} and \cite{aliyev6}, and demonstrate these exceptional cases using concrete examples.

\section{Notations and preliminary results.}
Suppose that $y(x,\lambda)$ is the solution of the boundary value problem  
\begin{equation}
-y^{\prime \prime }+q(x)y=\lambda y,  \label{(2.1)}
\end{equation}
\begin{equation}
y^{\prime }(0)=\cos \beta ,\ \   y(0)=\sin \beta \label{(2.2)}
\end{equation}
The characteristic function is defined as 
\begin{equation}
\omega(\lambda) = (a\lambda + b)y(1,\lambda) - (c\lambda + d)y{'}(1,\lambda).\label{(2.3)}
\end{equation}
Suppose that  $\lambda _{0}$ is a multiple eigenvalue.
The eigenfunction $y_{0}=y(x,\lambda_0)$ satisfy
\begin{equation}
-y_{0}^{\prime \prime }+q(x)y_{0}=\lambda _{k}y_{0},  \label{(2.4)}
\end{equation}
\begin{equation}
y_{0}^{\prime }(0)\sin \beta =y_{0}(0)\cos \beta ,  \label{(2.5)}
\end{equation}
\begin{equation}
(a\lambda _{0}+b)y_{0}(1)=(c\lambda _{0}+d)y^{\prime }_{0}(1).  \label{(2.6)}
\end{equation}
The following notation will be used throughout the paper:
\begin{equation}
\mathfrak{A}(y_0) =
\begin{cases}
\displaystyle  
\dfrac{y_0(1)}{c\lambda_0 + d}, 
& \text{if } \lambda_0 \neq -\dfrac{d}{c}, \\[10pt]
\displaystyle \ 
\dfrac{y_0'(1)}{a\lambda_0 + b}, 
& \text{if } \lambda_0 = -\dfrac{d}{c}.
\end{cases}\label{(2.11)}
\end{equation}
Note that if $\lambda _{0}$ is a double or triple eigenvalue, then 
\begin{equation}
(y_{0},y_0)=-(ad-bc)\mathfrak{A}^2(y_{0}).
\label{(2.17)}
\end{equation}
The first associated function
\(y_{1}\) of the eigenfunction \(y_{0}\) is defined by (see \cite{naim})
\begin{equation}
    {- y''_{1}} + q(x)y_{1} = \lambda_{k}y_{1} + y_{0}, \label{(3.1)}
\end{equation}
\begin{equation}
y_{1}(0)\cos\beta = y'_{ 1}(0)\sin\beta, \label{(3.2)}
\end{equation}
\begin{equation}
\left( a\lambda_{0} + b \right)y_{1}(1) + ay_{0}(1) = \left( c\lambda_{0} + d \right)y'_{1}(1) + cy'_{0}(1). \label{(3.3)}
\end{equation}
Let
$$\tilde{y}_{1}=\lim_{\lambda\to\lambda_0}y_\lambda(x,\lambda),$$and $y_{1}=\tilde{y}_{1}+Cy_0$ for some constant $C$.
Let us define 
\begin{equation}
\mathfrak{A}(y_{1}) =
\begin{cases}
\displaystyle  
\frac{y_{1}(1)}{c\lambda_{0}+d} - \frac{cy_{0}(1)}{{(c\lambda_{0}+d)}^{2}}, 
& \text{if } \lambda_0 \neq -\dfrac{d}{c}, \\[10pt]
\displaystyle \ 
\frac{y'_{1}(1)}{a\lambda_{0}+b} - \frac{ay'_{0}(1)}{{(a\lambda_{0}+b)}^{2}}, 
& \text{if } \lambda_0 = -\dfrac{d}{c}.
\end{cases}\label{(3.7)}
\end{equation}

\begin{equation}
\mathfrak{A}(\tilde{y}_{1}) =
\begin{cases}
\displaystyle  
\frac{\tilde{y}_{ 1}(1)}{c\lambda_{0}+d} - \frac{cy_{0}(1)}{{(c\lambda_{0}+d)}^{2}}, 
& \text{if } \lambda_0 \neq -\dfrac{d}{c}, \\[10pt]
\displaystyle \ 
\frac{\tilde{y}'_{1}(1)}{a\lambda_{0}+b} - \frac{ay'_{0}(1)}{{(a\lambda_{0}+b)}^{2}}, 
& \text{if } \lambda_0 = -\dfrac{d}{c}.
\end{cases}\label{(3.7.9)}
\end{equation}
Denote
\begin{equation}
   T_0:=(y_{1},y_{0})+(ad-bc)\mathfrak{A}(y_{1})\cdot \mathfrak{A}(y_{0}). \label{(Tk)} 
\end{equation}
Note that if  $\lambda _{0}$ is a multiple eigenvalue, 
then 
\begin{equation}
T_0=\mathfrak{A}(y_0)\cdot\frac{\omega^{\prime\prime}(\lambda_0)}{2}. \label{(3.16)}
\end{equation}

\noindent Similarly, for 
$\hat{y}_{1}=y_1+Cy_k$ define
\begin{equation}
\mathfrak{A}(\hat{y}_{1}) =
\begin{cases}
\displaystyle  
\frac{\hat{y}_{ 1}(1)}{c\lambda_{0}+d} - \frac{cy_{0}(1)}{{(c\lambda_{0}+d)}^{2}}, 
& \text{if } \lambda_0 \neq -\dfrac{d}{c}, \\[10pt]
\displaystyle \ 
\frac{\hat{y}'_{1}(1)}{a\lambda_{0}+b} - \frac{ay'_{0}(1)}{{(a\lambda_{0}+b)}^{2}}, 
& \text{if } \lambda_0 = -\dfrac{d}{c}.
\end{cases}\label{(3.16.1)}
\end{equation}
Suppose that  $\lambda _{0}$ is a multiple eigenvalue.
Denote
\begin{equation}
Q_0:=(y_{1},y_1)+(ad-bc)\mathfrak{A}^2(y_{1}).
\label{(2.17.1)}
\end{equation}
Note that
\begin{equation}
Q_0=\mathfrak{A}(\hat{y}_{1})\frac{\omega^{\prime\prime}(\lambda_0)}{2}+\mathfrak{A}(y_0)\cdot\frac{\omega^{\prime\prime\prime}(\lambda_0)}{6}. \label{(3.16.9)}
\end{equation}
If \(\lambda_{0}\) is a triple  eigenvalue, then the second associated function
\(y_{2}\) of the first associated function \(y_{1}\) is defined by (see \cite{naim})
\begin{equation}
 -y_{2}^{\prime \prime }+q(x)y_{2}=\lambda _{0}y_{2}+y_{1}, \label{(4.1)}  
\end{equation}
\begin{equation}
   y_{2}(0)\cos \beta  =y_{2}^{\prime }(0)\sin \beta,     \label{(4.2)}
\end{equation}
\begin{equation}
 \left( a\lambda_{0} + b \right)y_{ 2}(1) + ay_{1}(1) = \left( c\lambda_{0} + d \right)y'_{2}(1) + cy'_{1}(1).  \label{(4.3)}   \end{equation}
Let
$$\tilde{y}_{2}=\frac{1}{2}\lim_{\lambda\to\lambda_0}y_{\lambda\lambda}(x,\lambda),$$and $y_{2}=\tilde{y}_{2}+C\tilde{y}_{1}+Dy_0$ for some constant $D$.
We also define 
\begin{equation}
\mathfrak{A}(y_{2}) =
\begin{cases}
\displaystyle  
\frac{y_{2}(1)}{c\lambda_{0} + d} - \frac{cy_{1}(1)}{\left( c\lambda_{0} + d \right)^{2}}
+\frac{c^2y_{0}(1)}{\left( c\lambda_{0} + d \right)^{3}}, 
&  \text{if }  \lambda_0 \neq -\dfrac{d}{c}, \\[10pt]
\displaystyle \ 
\frac{y'_{2}(1)}{a\lambda_{0} + b} - \frac{ay'_{1}(1)}{\left( a\lambda_{0} + b \right)^{2}}
+\frac{a^2y'_{0}(1)}{\left( a\lambda_{0} + b\right)^{3}}, 
& \text{if } \lambda_0 = -\dfrac{d}{c},
\end{cases}\label{(4.7)}
\end{equation}
\begin{equation}
\mathfrak{A}(\tilde{y}_{2}) =
\begin{cases}
\displaystyle  
\frac{\tilde{y}_{2}(1)}{c\lambda_{0} + d} - \frac{c\tilde{y}_{1}(1)}{\left( c\lambda_{0} + d \right)^{2}}
+\frac{c^2y_{0}(1)}{\left( c\lambda_{0} + d \right)^{3}}, 
&  \text{if }  \lambda_0 \neq -\dfrac{d}{c}, \\[10pt]
\displaystyle \ 
\frac{\tilde{y}'_{2}(1)}{a\lambda_{0} + b} - \frac{a\tilde{y}'_{1}(1)}{\left( a\lambda_{0} + b \right)^{2}}
+\frac{a^2y'_{0}(1)}{\left( a\lambda_{0} + b\right)^{3}}, 
& \text{if } \lambda_0 = -\dfrac{d}{c}.
\end{cases}\label{(4.8)}
\end{equation}
Note that in the triple eigenvalue case $T_0=0$ and
\begin{equation}
   Q_0=(y_{2},y_{0})+(ad-bc)\mathfrak{A}(y_{2})\cdot \mathfrak{A}(y_{0})=\mathfrak{A}(y_0)\cdot\frac{\omega^{\prime\prime\prime}(\lambda_0)}{6}\neq 0. \label{(Qk)} 
\end{equation}
If $\lambda_0$ is an eigenvalue of multiplicity two, then define 
\(
y_{1}^* = y_{1} + C_1 y_0,
\)
where
\begin{equation}
    C_1=-\frac{\mathfrak{A}(\hat{y}_{1})\frac{\omega^{\prime\prime}(\lambda_0)}{2}+\mathfrak{A}(y_0)\frac{\omega^{\prime\prime\prime}(\lambda_0)}{6}}{\mathfrak{A}(y_0)\frac{\omega^{\prime\prime}(\lambda_0)}{2}}. \label{(5.1)}
\end{equation}
Denote 
\begin{equation}
 \mathfrak{A}(y^{*}_{1}) =
\begin{cases}
\displaystyle  
\frac{y^{*}_{1}(1)}{c\lambda_{0}+d} - \frac{cy_{0}(1)}{{(c\lambda_{0}+d)}^{2}}, 
& \text{if } \lambda_0 \neq -\dfrac{d}{c}, \\[10pt]
\displaystyle \ 
\frac{(y^{*}_{1})'(1)}{a\lambda_{0}+b} - \frac{ay'_{0}(1)}{{(a\lambda_{0}+b)}^{2}}, 
&  \text{if } \lambda_0 = -\dfrac{d}{c}.
\end{cases} \label{(5.2.1)}
\end{equation}
Note that
$C_1 = -\frac{Q_0}{T_0}$,

If $\lambda_0$ is a triple eigenvalue, then define 
\(
y_{1}^\# = y_{1} + C_2 y_0,
\)
where
\begin{equation}
    C_2=-\frac{\mathfrak{A}(\hat{y}_{1})\frac{\omega^{\prime\prime\prime}(\lambda_0)}{6}+\mathfrak{A}(y_0)\frac{\omega^{IV}(\lambda_0)}{24}}{\mathfrak{A}(y_0)\frac{\omega^{\prime\prime\prime}(\lambda_0)}{6}}. \label{(5.3)}
\end{equation}
Denote
\begin{equation}
 \mathfrak{A}(y^{\#}_{1}) =
\begin{cases}
\displaystyle  
\frac{y^{\#}_{1}(1)}{c\lambda_{0}+d} - \frac{cy_{0}(1)}{{(c\lambda_{0}+d)}^{2}}, 
& \text{if } \lambda_0 \neq -\dfrac{d}{c}, \\[10pt]
\displaystyle \ 
\frac{(y^{\#}_{1})'(1)}{a\lambda_{0}+b} - \frac{ay'_{0}(1)}{{(a\lambda_{0}+b)}^{2}}, 
& \text{if } \lambda_0 = -\dfrac{d}{c}.
\end{cases} \label{(5.5.1)}  
\end{equation}
\begin{equation}
   L_{0}=(y_{1},y_{2})+(ad-bc) \mathfrak{A}(y_{1})
 \mathfrak{A}(y_{2}), \label{(2.28)} 
\end{equation}
Note that $C_2=-\frac{L_0}{Q_0}$.
For the function \(y^{*}_{2}=y_{2}+C_{2}y_{1}\), denote
\begin{equation}
   \mathfrak{A}(y^{*}_{2}) =
\begin{cases}
\displaystyle  
\frac{y^{*}_{2}(1)}{c\lambda_{0} + d} - \frac{cy^{\#}_{1}(1)}{\left( c\lambda_{0} + d \right)^{2}}
+\frac{c^2y_{0}(1)}{\left( c\lambda_{0} + d \right)^{3}}, 
& \text{if } \lambda_0 \neq -\dfrac{d}{c}, \\[10pt]
\displaystyle \ 
\frac{(y^{*}_{2})'(1)}{a\lambda_{0} + b} - \frac{a(y^{\#}_{1})'(1)}{\left( a\lambda_{0} + b \right)^{2}}
+\frac{a^2y'_{0}(1)}{\left( a\lambda_{0} + b\right)^{3}}, 
& \text{if } \lambda_0 = -\dfrac{d}{c}.
\end{cases} \label{(5.5.2)}
\end{equation}
Denote also 
\begin{equation}
J_0=(y_{2}^*, y_{2})+(ad-bc)\mathfrak{A}(y_{2}^*)\mathfrak{A}(y_{2}). \label{(5.7)}
\end{equation}
Note that 
 \begin{equation}
   J_0 
    ={\mathfrak{A}(\hat{y}_{2})\frac{\omega^{\prime\prime\prime}(\lambda_0)}{6}+\mathfrak{A}(\hat{y}_{1})\frac{\omega^{IV}(\lambda_0)}{24}}+{\mathfrak{A}(y_0)\frac{\omega^{V}(\lambda_0)}{120}}-C_2^2\mathfrak{A}(y_0)\frac{\omega^{\prime\prime\prime}(\lambda_0)}{6}, \label{(5.8)}
\end{equation}
where 
$\hat{y}_{2}=y_2+Cy_1+Dy_0$ and
\begin{equation}
\mathfrak{A}(\hat{y}_{2}) =
\begin{cases}
\displaystyle  
\frac{\hat{y}_{2}(1)}{c\lambda_{0} + d} - \frac{c\hat{y}_{1}(1)}{\left( c\lambda_{0} + d \right)^{2}}
+\frac{c^2y_{0}(1)}{\left( c\lambda_{0} + d \right)^{3}}, 
&  \text{if }  \lambda_0 \neq -\dfrac{d}{c}, \\[10pt]
\displaystyle \ 
\frac{\hat{y}'_{2}(1)}{a\lambda_{0} + b} - \frac{a\hat{y}'_{1}(1)}{\left( a\lambda_{0} + b \right)^{2}}
+\frac{a^2y'_{0}(1)}{\left( a\lambda_{0} + b\right)^{3}}, 
& \text{if } \lambda_0 = -\dfrac{d}{c}.
\end{cases}\label{(5.5.4)}
\end{equation}

If $\lambda_0$ is a triple eigenvalue, then define
$y^{\#}_{2}=y^{*}_{2}+D_{1}y_{0}$
where
\begin{equation}
    D_1 =-\frac{{\mathfrak{A}(\hat{y}_{2})\frac{\omega^{\prime\prime\prime}(\lambda_0)}{6}+\mathfrak{A}(\hat{y}_{1})\frac{\omega^{IV}(\lambda_0)}{24}}+{\mathfrak{A}(y_0)\frac{\omega^{V}(\lambda_0)}{120}}-C_2^2\mathfrak{A}(y_0)\frac{\omega^{\prime\prime\prime}(\lambda_0)}{6}}{\mathfrak{A}(y_0)\frac{\omega^{\prime\prime\prime}(\lambda_0)}{6}}. \label{(5.9)}
\end{equation}
Note that $D_1=-\frac{J_0}{Q_0}$. Denote also
\begin{equation}
\mathfrak{A}(y^{\#}_{2}) =
\begin{cases}
\displaystyle  
\frac{y^{\#}_{2}(1)}{c\lambda_{0} + d} - \frac{cy^{\#}_{1}(1)}{\left( c\lambda_{0} + d \right)^{2}}
+\frac{c^2y_{0}(1)}{\left( c\lambda_{0} + d \right)^{3}}, 
& \text{if } \lambda_0 \neq -\dfrac{d}{c}, \\[10pt]
\displaystyle \ 
\frac{(y^{\#}_{2})'(1)}{a\lambda_{0} + b} - \frac{a(y^{\#}_{1})'(1)}{\left( a\lambda_{0} + b \right)^{2}}
+\frac{a^2y'_{0}(1)}{\left( a\lambda_{0} + b\right)^{3}}, 
& \text{if }  \lambda_0 = -\dfrac{d}{c}.
\end{cases}\label{(5.5.3)}  
\end{equation}

\section{Equivalent forms of necessary and sufficient conditions}
The following three theorems prove the equivalence of the minimality conditions in \cite{aliyev5} and \cite{aliyev6}.
\begin{thm} 
If $\lambda_0$ is a double eigenvalue, then $\mathfrak{A}(y_{1}^{*}) = 0$ if and only if $C=-\frac{\omega^{\prime\prime\prime}(\lambda_0)}{3\omega^{\prime\prime}(\lambda_0)}$.
\end{thm}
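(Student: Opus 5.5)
Since $\mathfrak{A}(\cdot)$ is affine in its argument, the plan is to compute $\mathfrak{A}(y_1^*)$ explicitly in terms of $C$ and reduce the condition $\mathfrak{A}(y_1^*)=0$ to a linear equation in $C$.

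\textbf{Step 1: linearity.} From the definitions (\ref{(3.7)}), (\ref{(3.16.1)}) and (\ref{(5.2.1)}), the correction term $-\frac{cy_0(1)}{(c\lambda_0+d)^2}$ (respectively $-\frac{ay_0'(1)}{(a\lambda_0+b)^2}$) is the same for every function of the form $y_1+\gamma y_0$. Only the first term is linear in $y_1$. Hence, for any constant $\gamma$,
\[
\mathfrak{A}(y_1+\gamma y_0)=\mathfrak{A}(y_1)+\gamma\,\mathfrak{A}(y_0).
\]
Since $y_1^*=y_1+C_1y_0$, this gives $\mathfrak{A}(y_1^*)=\mathfrak{A}(y_1)+C_1\mathfrak{A}(y_0)$.

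\textbf{Step 2: substitute $C_1$.} Here I read $\hat y_1$ in (\ref{(5.1)}) as the same associated function used in (\ref{(3.16.9)}), so that $\mathfrak{A}(\hat y_1)=\mathfrak{A}(y_1)$ and $C_1=-Q_0/T_0$, as noted after (\ref{(5.2.1)}). Using (\ref{(3.16)}), (\ref{(3.16.9)}) and $\omega''(\lambda_0)\neq0$ (the eigenvalue is double), we get
\[
\mathfrak{A}(y_1^*)=\mathfrak{A}(y_1)-\frac{\mathfrak{A}(y_1)\frac{\omega''}{2}+\mathfrak{A}(y_0)\frac{\omega'''}{6}}{\frac{\omega''}{2}}
=-\frac{\omega'''(\lambda_0)}{3\omega''(\lambda_0)}\,\mathfrak{A}(y_0).
\]
This does not involve $C$, so by itself it cannot give the stated equivalence. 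The $C$-dependence must therefore enter through how (\ref{(3.16.9)}) was derived: that identity is really stated for $\tilde y_1=\lim y_\lambda$, or equivalently $\hat y_1$ is the shifted function in which $C$ appears.

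\textbf{Step 3: redo the computation with $C$ explicit.} Write $y_1=\tilde y_1+Cy_0$, so $\mathfrak{A}(y_1)=\mathfrak{A}(\tilde y_1)+C\mathfrak{A}(y_0)$. Expanding $(y_1,y_1)$ and using $T_0$, (\ref{(2.17)}) and (\ref{(3.16)}), I expect
\[
Q_0=\tilde Q_0+2C\,T_0+C^2\cdot 0,
\]
because $(y_0,y_0)+(ad-bc)\mathfrak{A}^2(y_0)=0$ by (\ref{(2.17)}). Here $\tilde Q_0$ is given by (\ref{(3.16.9)}) with $\hat y_1=\tilde y_1$. Then
\[
\mathfrak{A}(y_1^*)=\mathfrak{A}(\tilde y_1)+C\mathfrak{A}(y_0)-\frac{\tilde Q_0+2CT_0}{T_0}\mathfrak{A}(y_0).
\]
Substituting (\ref{(3.16)}) for $T_0$ makes the $\mathfrak{A}(\tilde y_1)$ terms cancel, leaving
\[
\mathfrak{A}(y_1^*)=-\mathfrak{A}(y_0)\left(C+\frac{\omega'''(\lambda_0)}{3\omega''(\lambda_0)}\right).
\]

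\textbf{Step 4: conclude.} We need $\mathfrak{A}(y_0)\neq0$. This holds because otherwise $y_0(1)=y_0'(1)=0$ in the first case (or the analogous conditions in the second case), so $y_0\equiv0$; alternatively, (\ref{(2.17)}) with $y_0\neq0$ gives it directly. Therefore $\mathfrak{A}(y_1^*)=0$ if and only if $C=-\omega'''(\lambda_0)/(3\omega''(\lambda_0))$.

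\textbf{Main obstacle.} The difficulty is bookkeeping: deciding whether (\ref{(3.16.9)}) refers to $\tilde y_1$ or to $y_1$, and tracking the $2CT_0$ term correctly. I would first verify (\ref{(3.16.9)}) at $C=0$ against the derivation via differentiating $\omega$, and then apply the bilinear expansion above.
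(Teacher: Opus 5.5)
Your final argument (Steps 1, 3 and 4) is correct and is essentially the paper's proof. The paper reads $C_1=-\mathfrak{A}(y_1)/\mathfrak{A}(y_0)-C-\omega'''(\lambda_0)/(3\omega''(\lambda_0))$ straight off \eqref{(5.1)} using its convention $\hat y_1=y_1+Cy_0=\tilde y_1+2Cy_0$, which is exactly the $\tilde Q_0+2CT_0$ bookkeeping you derive from the neutrality relation \eqref{(2.17)}; your Step~2 therefore just misread $\hat y_1$ (it is not $y_1$) and can be dropped, while your check that $\mathfrak{A}(y_0)\neq0$ supplies a step the paper uses without comment.
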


\begin{proof}
Note that by (\ref{(5.1)}) and \(
\hat{y}_{1} = y_{1} + Cy_0\),
\begin{equation}
 C_1=-\frac{\mathfrak{A}({y}_{1})}{\mathfrak{A}(y_0)}-C-\frac{\omega^{\prime\prime\prime}(\lambda_0)}{3\omega^{\prime\prime}(\lambda_0)}. 
\end{equation}
By (\ref{(5.2.1)}) and \(y_{1}^* = y_{1} + C_1 y_0\), equality $\mathfrak{A}(y^{*}_{1})=0$ is equivalent to $\frac{\mathfrak{A}(y_{1})}{\mathfrak{A}(y_0)}+C_1=0$, which simplifies to
$C=-\frac{\omega^{\prime\prime\prime}(\lambda_0)}{3\omega^{\prime\prime}(\lambda_0)}$.
\end{proof}

\begin{thm} 
If $\lambda_0$ is a triple eigenvalue, then $\mathfrak{A}(y_{1}^{\#})= 0$ if and only if $C=-\frac{\omega^{IV}(\lambda_0)}{4\omega^{\prime\prime\prime}(\lambda_0)}$.
\end{thm}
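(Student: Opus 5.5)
The plan is to mirror the proof of the double-eigenvalue theorem, replacing the constants in (\ref{(5.1)}) by those in (\ref{(5.3)}).

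First I need to know how $\mathfrak{A}$ acts on $\hat{y}_1 = y_1 + Cy_0$. Comparing (\ref{(3.16.1)}) with (\ref{(3.7)}) and (\ref{(2.11)}), the term involving $y_0$ (namely $-cy_0(1)/(c\lambda_0+d)^2$, or its analogue in the case $\lambda_0=-d/c$) appears unchanged. The shift $Cy_0$ therefore only adds $C\,y_0(1)/(c\lambda_0+d)$, respectively $C\,y_0'(1)/(a\lambda_0+b)$, which is $C\mathfrak{A}(y_0)$. Hence
$$\mathfrak{A}(\hat{y}_1)=\mathfrak{A}(y_1)+C\mathfrak{A}(y_0).$$
This is exactly the linearity already used implicitly in the double case.

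Next I substitute this into (\ref{(5.3)}). Since $\lambda_0$ is triple, (\ref{(Qk)}) gives $\mathfrak{A}(y_0)\omega'''(\lambda_0)/6 = Q_0 \neq 0$, so I may divide by it. The result is
$$C_2=-\frac{\mathfrak{A}(y_1)}{\mathfrak{A}(y_0)}-C-\frac{\omega^{IV}(\lambda_0)}{4\omega'''(\lambda_0)},$$
where the last term comes from $\frac{1/24}{1/6}=\frac14$.

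Then I compute $\mathfrak{A}(y_1^\#)$ from (\ref{(5.5.1)}). Applying the same linearity to $y_1^\# = y_1 + C_2 y_0$ gives
$$\mathfrak{A}(y_1^\#)=\mathfrak{A}(y_1)+C_2\mathfrak{A}(y_0).$$
Because $\mathfrak{A}(y_0)\neq0$, the condition $\mathfrak{A}(y_1^\#)=0$ is equivalent to $\mathfrak{A}(y_1)/\mathfrak{A}(y_0)+C_2=0$. Substituting the expression for $C_2$, the $\mathfrak{A}(y_1)$ terms cancel and the condition becomes $C=-\omega^{IV}(\lambda_0)/(4\omega'''(\lambda_0))$. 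The argument runs through equivalences, so it gives both directions.

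The main obstacle is a notational subtlety rather than an analytic one. One must interpret $y_1$ in (\ref{(5.3)}) consistently with the normalization $y_1=\tilde y_1+Cy_0$, where $C$ is the constant in the statement. Only then is the dependence on $C$ captured correctly, just as in the double case. One must also check that both branches $\lambda_0\neq -d/c$ and $\lambda_0=-d/c$ of the definition of $\mathfrak{A}$ satisfy the linearity identity; they do by the same one-line comparison.
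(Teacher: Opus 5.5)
Your proposal is correct and is essentially the paper's proof. You rewrite $C_2$ from \eqref{(5.3)} using $\mathfrak{A}(\hat{y}_1)=\mathfrak{A}(y_1)+C\mathfrak{A}(y_0)$, reduce $\mathfrak{A}(y_1^{\#})=0$ to $\mathfrak{A}(y_1)/\mathfrak{A}(y_0)+C_2=0$, and the $\mathfrak{A}(y_1)$ terms cancel; you also make explicit two details the paper leaves implicit, namely $\mathfrak{A}(y_0)\neq 0$ by \eqref{(Qk)} and the validity of the affine identity for $\mathfrak{A}$ on both branches of its definition.
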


\begin{proof}
By (\ref{(5.3)}) and \(
\hat{y}_{1} = y_{1} + Cy_0\),
\begin{equation}
 C_2=-\frac{\mathfrak{A}({y}_{1})}{\mathfrak{A}(y_0)}-C-\frac{\omega^{IV}(\lambda_0)}{4\omega^{\prime\prime\prime}(\lambda_0)}. \label{C2new}
\end{equation}
By (\ref{(5.5.1)}) and \(y_{1}^\# = y_{1} + C_2 y_0\), equality $\mathfrak{A}(y^{\#}_{1})=0$ is equivalent to $\frac{\mathfrak{A}(y_{1})}{\mathfrak{A}(y_0)}+C_2=0$, which simplifies to
$C=-\frac{\omega^{IV}(\lambda_0)}{4\omega^{\prime\prime\prime}(\lambda_0)}$.
\end{proof}

\begin{thm}
If $\lambda_0$ is a triple eigenvalue, then  $\mathfrak{A}(y_{2}^{\#})=0$ if and only if
\begin{equation}
 D=C^2+
\frac{\omega^{IV}(\lambda_0)}{4\omega^{\prime\prime\prime}(\lambda_0)}\left(C+\frac{\omega^{IV}(\lambda_0)}{4\omega^{\prime\prime\prime}(\lambda_0)}\right)-\frac{\omega^{V}(\lambda_0)}{20\omega^{\prime\prime\prime}(\lambda_0)}. \label{(6.25)}  
\end{equation}
\end{thm}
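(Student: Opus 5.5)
The plan is to follow the proofs of the two preceding theorems: use the linearity of $\mathfrak{A}$ to reduce $\mathfrak{A}(y_2^{\#})=0$ to a scalar identity in $C$, $D$, $C_2$, $D_1$, then substitute the explicit forms of $C_2$ and $D_1$. I treat the case $\lambda_0\neq -d/c$; the case $\lambda_0=-d/c$ is identical with $y(1)$ replaced by $y'(1)$ and $c\lambda_0+d$ replaced by $a\lambda_0+b$. As in the proof of the first theorem, I read the hatted functions through $\mathfrak{A}(\hat y_1)=\mathfrak{A}(y_1)+C\,\mathfrak{A}(y_0)$, which is the reading that yields the displayed expression for $C_1$ there; this interpretation is assumed rather than derived.

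\emph{Step 1 (linearity).} Since $y_2^{\#}=y_2+C_2y_1+D_1y_0$ and $y_1^{\#}=y_1+C_2y_0$, I expand (\ref{(5.5.3)}) term by term and regroup:
\[
\mathfrak{A}(y_2^{\#})=\mathfrak{A}(y_2)+C_2\,\mathfrak{A}(y_1)+D_1\,\mathfrak{A}(y_0).
\]
The $C_2$-terms combine into $C_2\bigl(\frac{y_1(1)}{c\lambda_0+d}-\frac{cy_0(1)}{(c\lambda_0+d)^2}\bigr)=C_2\,\mathfrak{A}(y_1)$, and the $D_1$-term is $D_1\,\mathfrak{A}(y_0)$ by (\ref{(2.11)}). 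In the same way, with $\hat y_2=y_2+Cy_1+Dy_0$ and $\hat y_1=y_1+Cy_0$, I get
\[
\mathfrak{A}(\hat y_2)=\mathfrak{A}(y_2)+C\,\mathfrak{A}(y_1)+D\,\mathfrak{A}(y_0),\qquad \mathfrak{A}(\hat y_1)=\mathfrak{A}(y_1)+C\,\mathfrak{A}(y_0).
\]

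\emph{Step 2 (substitution).} Since $\mathfrak{A}(y_0)\neq0$ by (\ref{(Qk)}), I divide by it and introduce the shorthand
\[
a_1=\frac{\mathfrak{A}(y_1)}{\mathfrak{A}(y_0)},\quad a_2=\frac{\mathfrak{A}(y_2)}{\mathfrak{A}(y_0)},\quad r=\frac{\omega^{IV}(\lambda_0)}{4\omega'''(\lambda_0)},\quad s=\frac{\omega^{V}(\lambda_0)}{20\omega'''(\lambda_0)}.
\]
From (\ref{C2new}) we have $C_2=-a_1-C-r$. From (\ref{(5.9)}), dividing numerator and denominator by $\mathfrak{A}(y_0)\omega'''(\lambda_0)/6$, we get
\[
D_1=-(a_2+Ca_1+D)-(a_1+C)r-s+C_2^2.
\]
The condition $\mathfrak{A}(y_2^{\#})=0$ then becomes $a_2+C_2a_1+D_1=0$. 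The $a_2$ cancels, and solving for $D$ gives
\[
D=C_2(a_1+C_2)-Ca_1-a_1r-Cr-s.
\]

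\emph{Step 3 (simplification).} By the formula for $C_2$, $a_1+C_2=-(C+r)$, so $C_2(a_1+C_2)=(a_1+C+r)(C+r)$. Expanding, the $a_1$-terms $a_1C+a_1r$ cancel against $-Ca_1-a_1r$. What remains is
\[
D=(C+r)^2-Cr-s=C^2+r(C+r)-s,
\]
which is exactly (\ref{(6.25)}). Every step is reversible because we only divided by $\mathfrak{A}(y_0)\neq0$, so the equivalence holds in both directions.

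The main obstacle is Step 1 together with the bookkeeping of hats versus unhatted functions. The definition of $\mathfrak{A}(y_2^{\#})$ uses $y_1^{\#}$, not $y_1$, in its middle term. One must check that this choice makes the $C_2y_0(1)$ contributions collapse into $C_2\,\mathfrak{A}(y_1)$, leaving no stray term proportional to $C_2\,cy_0(1)$. One must also interpret $\mathfrak{A}(\hat y_1)$ and $\mathfrak{A}(\hat y_2)$ consistently with the way $C_2$ was computed in the previous theorem. Once this is settled, the cancellation of $a_1$ and $a_2$ is routine algebra.
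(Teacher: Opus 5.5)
Your proposal is correct and is essentially the paper's proof. Both arguments use linearity of $\mathfrak{A}$ to reduce $\mathfrak{A}(y_2^{\#})=0$ to $a_2+C_2a_1+D_1=0$, expand $D_1$ from \eqref{(5.9)} via $\hat y_1=y_1+Cy_0$ and $\hat y_2=y_2+Cy_1+Dy_0$, and cancel using $C_2=-a_1-C-r$. The paper substitutes for $a_1$ where you substitute for $C_2$, and you check the linearity step explicitly, including that $\mathfrak{A}(\hat y_1)=\mathfrak{A}(y_1)+C\,\mathfrak{A}(y_0)$ follows directly from \eqref{(3.16.1)} and so need not be assumed.
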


\begin{proof}
By (\ref{(5.9)}),
$$
D_1=-\frac{\mathfrak{A}(\hat{y}_{2})}{\mathfrak{A}(y_0)}--\frac{\mathfrak{A}(\hat{y}_{1})}{\mathfrak{A}(y_0)}\cdot\frac{\omega^{IV}(\lambda_0)}{4\omega^{\prime\prime\prime}(\lambda_0)}-\frac{\omega^{V}(\lambda_0)}{20\omega^{\prime\prime\prime}(\lambda_0)}+C_2^2.
$$
By using \(
\hat{y}_{1} = y_{1} + Cy_0,
\) and $\hat{y}_{2}=y_2+Cy_1+Dy_0$, we obtain
\begin{equation*}
    D_1=-\frac{\mathfrak{A}({y}_{2})+C\mathfrak{A}({y}_{1})+D\mathfrak{A}({y}_{0})}{\mathfrak{A}(y_0)}-\frac{\mathfrak{A}({y}_{1})+C\mathfrak{A}({y}_{0})}{\mathfrak{A}(y_0)}\cdot\frac{\omega^{IV}(\lambda_0)}{4\omega^{\prime\prime\prime}(\lambda_0)}-\frac{\omega^{V}(\lambda_0)}{20\omega^{\prime\prime\prime}(\lambda_0)}+C_2^2
\end{equation*}
\begin{equation}
    =-\frac{\mathfrak{A}({y}_{2})}{\mathfrak{A}(y_0)}-\frac{\mathfrak{A}({y}_{1})}{\mathfrak{A}(y_0)}\cdot\left(C+\frac{\omega^{IV}(\lambda_0)}{4\omega^{\prime\prime\prime}(\lambda_0)}\right)-\left(D+C\cdot\frac{\omega^{IV}(\lambda_0)}{4\omega^{\prime\prime\prime}(\lambda_0)}+\frac{\omega^{V}(\lambda_0)}{20\omega^{\prime\prime\prime}(\lambda_0)}\right)+C_2^2. \label{D1new}
\end{equation}
By  (\ref{(5.5.3)}), $\mathfrak{A}(y^{\#}_{2})=0$ is equivalent to $\frac{\mathfrak{A}(y_{2})}{\mathfrak{A}(y_0)}+C_2\frac{\mathfrak{A}(y_{1})}{\mathfrak{A}(y_0)}+D_1=0.$
By (\ref{C2new}) and (\ref{D1new}), this is transformed to
$$\frac{\mathfrak{A}(y_{2})}{\mathfrak{A}(y_0)}+C_2\cdot\left(-C_2-C-\frac{\omega^{IV}(\lambda_0)}{4\omega^{\prime\prime\prime}(\lambda_0)}\right)-\frac{\mathfrak{A}({y}_{2})}{\mathfrak{A}(y_0)}$$
$$-\left(-C_2-C-\frac{\omega^{IV}(\lambda_0)}{4\omega^{\prime\prime\prime}(\lambda_0)}\right)\cdot\left(C+\frac{\omega^{IV}(\lambda_0)}{4\omega^{\prime\prime\prime}(\lambda_0)}\right)-\left(D+C\cdot\frac{\omega^{IV}(\lambda_0)}{4\omega^{\prime\prime\prime}(\lambda_0)}+\frac{\omega^{V}(\lambda_0)}{20\omega^{\prime\prime\prime}(\lambda_0)}\right)+C_2^2=0.
$$
After cancelling $\frac{\mathfrak{A}(y_{2})}{\mathfrak{A}(y_0)}$, $C_2^2$ and $C_2\cdot\left(C+\frac{\omega^{IV}(\lambda_0)}{4\omega^{\prime\prime\prime}(\lambda_0)}\right)$, we obtain
$$
D=\left(C+\frac{\omega^{IV}(\lambda_0)}{4\omega^{\prime\prime\prime}(\lambda_0)}\right)^2-C\cdot \frac{\omega^{IV}(\lambda_0)}{4\omega^{\prime\prime\prime}(\lambda_0)}-\frac{\omega^{V}(\lambda_0)}{20\omega^{\prime\prime\prime}(\lambda_0)},
$$
which is equivalent to (\ref{(6.25)}).
\end{proof}
\section{Examples.} The following 3 examples are about the double eigenvalue case.
\subsection*{Example 1.}
Let us take the problem  
$$
-y^{\prime \prime }=\lambda y,\ 0<x<1,
$$
$$
y(0)=0,\ 
\left(\left(\frac{1}{\pi^2}+\frac{1}{3}\right)\lambda-1\right)y(1)=\left(\frac{1}{\pi^2}{\lambda }-1\right)y^{\prime }(1).
$$
Note that $a=\frac{1}{\pi^2}+\frac{1}{3}$, $b=-1$, $c=\frac{1}{\pi^2}$, $d=-1$, $ad-bc=-\frac{1}{3}<0$, $q(x)\equiv0$. We obtain that
$\lambda _{0}=\lambda _{1}=0$ is the double eigenvalue, and $\lambda _{0}\neq-\frac{d}{c}$. The second eigenvalue $\lambda _{2}={\pi^2}$ is simple and $\lambda _{2}=-\frac{d}{c}$. The remaining eigenvalues are ${\pi^2}<\lambda _{3}<\lambda _{4}<\ldots$. The eigenfunctions are $y_{0}=x$, $y_{2}=\frac{\sin{\pi x}}{\pi}$, $y_{n}=\frac{\sin{\sqrt{\lambda _{n}}x}}{{\sqrt{\lambda _{n}}}}$ $(n\ge 3)$, and the first associated function of $y_{0}$ is $y_{1}=-\frac{1}{6}x^3+Cx$, where $C$ is
a constant.
By formula (\ref{(Tk)}),
$$T_0:=(y_{1},y_{0})-\frac{1}{3}\left( \frac{y_{1}(1)}{-1} - \frac{1}{\pi^2}\cdot\frac{y_{0}(1)}{{(-1)}^{2}}\right)\cdot \frac{y_{0}(1)}{-1}=\frac{1}{45}-\frac{1}{3 \mathrm{\pi}^{2}}.$$
By (\ref{(2.17.1)}),
$$Q_{0}=(y_{1},y_{1})-\frac{1}{3}\left(\frac{y_{1}(1)}{-1} - \frac{1}{\pi^2}\cdot\frac{y_{0}(1)}{(-1)^{2}}
\right)^2
$$
$$=\frac{-315+\left(42 C-5\right) \mathrm{\pi}^{4}+\left(-630 C+105\right) \mathrm{\pi}^{2}}{945 \mathrm{\pi}^{4}}.$$
Then $C_{1}=-\frac{Q_0}{T_0}=\frac{315+\left(-42 C+5\right) \pi^{4}+\left(630 C-105\right) \pi^{2}}{21 \pi^{2} \left(\pi^{2}-15\right)}$. Therefore, 
$$
y_{1}^{*}=y_{1}+C_{1}y_{0}=-\frac{x \left(-15+\left(\frac{x^{2}}{6}+C-\frac{5}{21}\right) \pi^{4}+\left(-\frac{5 x^{2}}{2}-15 C+5\right) \pi^{2}\right)}{\pi^{2} \left(\pi^{2}-15\right)}.
$$
By (\ref{(5.2.1)}),
$$
\mathfrak{A}(y^{*}_{k+1}) =\frac{y_{1}^{*}(1)}{-1} -\frac{1}{{\pi}^{2}}\cdot \frac{y_{0}(1)}{(-1)^{2}}=\frac{\left(14 C-1\right) \pi^{2}-210 C+21}{14 \pi^{2}-210}.
$$
Consequently, $\mathfrak{A}(y^{*}_{k+1}) =0$ if and only if $C=\frac{\pi^{2}-21}{14 \left(\pi^{2}-15\right)}$.
On the other hand the characteristic function is
$$\omega(\lambda)=\frac{\left(\left(\frac{1}{\pi^{2}}+\frac{1}{3}\right)\cdot \lambda -1\right)\cdot \sin\! \left(\sqrt{\lambda}\right)}{\sqrt{\lambda}}-\left(\frac{\lambda}{\pi^{2}}-1\right)\cdot \cos\! \left(\sqrt{\lambda}\right).$$
Therefore, $\omega^{\prime\prime}(0)=\frac{-2 \pi^{2}+30}{45 \pi^{2}}$, $\omega^{\prime\prime\prime}(0)=\frac{\pi^{2}-21}{105 \pi^{2}}$. By Theorem 3.1, $$C=-\frac{\omega^{\prime\prime\prime}(\lambda_k)}{3\omega^{\prime\prime}(\lambda_k)}=\frac{\pi^{2}-21}{14 \pi^{2}-210},$$which confirms the earlier result that was obtained using the first method.

\subsection*{Example 2.}
Let us now take the problem  
$$
-y^{\prime \prime }=\lambda y,\ 0<x<1,
$$
$$
y(0)=0,\ 
\left(\lambda-\frac{\pi^2}{4}\right)y(1)=\left(\left(-\frac{\pi^2}{12}+1\right){\lambda }-\frac{\pi^2}{4}\right)y^{\prime }(1).
$$
Note that $a=1$, $b=-\frac{\pi^2}{4}$, $c=-\frac{\pi^2}{12}+1$, $d=-\frac{\pi^2}{4}$, $ad-bc=-\frac{\pi^4}{48}<0$. We obtain that
$\lambda _{0}=\lambda _{1}=0$ is the double eigenvalue, the second eigenvalue $\lambda _{2}=\frac{\pi^2}{4}$ is simple, and $\lambda _{0},\lambda _{2}\neq-\frac{d}{c}$. The remaining eigenvalues are $\frac{\pi^2}{4}<\lambda _{3}<\lambda _{4}<\ldots$. The eigenfunctions are $y_{0}=x$, $y_{2}=\frac{2\sin{\frac{\pi x}{2}}}{\pi}$, $y_{n}=\frac{\sin{\sqrt{\lambda _{n}}x}}{{\sqrt{\lambda _{n}}}}$ $(n\ge 3)$, and the first associated function of $y_{0}$ is again $y_{1}=-\frac{1}{6}x^3+Cx$, where $C$ is
a constant.
By formula (\ref{(Tk)}), $T_0=\frac{2 \pi^{2}-20}{15 \pi^{2}}.$
By (\ref{(2.17.1)}), $Q_{0}=\frac{-1680+\left(84 C-25\right) \pi^{4}+\left(-840 C+420\right) \pi^{2}}{315 \pi^{4}}$. Then $C_1=\frac{1680+\left(-84 C+25\right) \pi^{4}+\left(840 C-420\right) \pi^{2}}{42 \pi^{2} \left(\pi^{2}-10\right)}$, and therefore, 
$$
y_{1}^{*}=-\frac{x \left(-40+\left(\frac{x^{2}}{6}+C-\frac{25}{42}\right) \pi^{4}+\left(-\frac{5 x^{2}}{3}-10 C+10\right) \pi^{2}\right)}{\pi^{2} \left(\pi^{2}-10\right)}.
$$
By (\ref{(5.2.1)}), $
\mathfrak{A}(y^{*}_{k+1}) =\frac{\left(84 C-8\right) \pi^{2}-840 C+84}{21 \pi^{2} \left(\pi^{2}-10\right)}.
$
Consequently, $\mathfrak{A}(y^{*}_{k+1}) =0$ if and only if $C=\frac{2 \pi^{2}-21}{21 \left(\pi^{2}-10\right)}$.
If we approach the problem through the characteristic function
$$\omega(\lambda)=\frac{\left(\lambda -\frac{\pi^{2}}{4}\right)\cdot \sin\! \left(\sqrt{\lambda}\right)}{\sqrt{\lambda}}-\left(\left(-\frac{\pi^{2}}{12}+1\right)\cdot \lambda -\frac{\pi^{2}}{4}\right)\cdot \cos\! \left(\sqrt{\lambda}\right),$$
then $\omega^{\prime\prime}(0)=-\frac{\pi^{2}}{15}+\frac{2}{3}$, $\omega^{\prime\prime\prime}(0)=\frac{2 \pi^{2}}{105}-\frac{1}{5}$. By Theorem 3.1, $$C=-\frac{\omega^{\prime\prime\prime}(\lambda_k)}{3\omega^{\prime\prime}(\lambda_k)}=\frac{2 \pi^{2}-21}{21 \pi^{2}-210},$$which coincides with the above result.

\subsection*{Example 3.}
Let us now take the problem  
$$
-y^{\prime \prime }=\lambda y,\ 0<x<1,
$$
$$
y(0)=-y'(0),\ 
\left(-\left(\frac{3}{\pi^2}+1\right)\lambda+3\right)y(1)=\lambda y^{\prime }(1).
$$
Note that $a=-\left(\frac{3}{\pi^2}+1\right)$, $b=3$, $c=1$, $d=0$, $ad-bc=-3<0$. We obtain that
$\lambda _{0}=\lambda _{1}=0$ is the double eigenvalue, the second eigenvalue $\lambda _{2}={\pi^2}$ is simple, and $\lambda _{0}=-\frac{d}{c}\neq\lambda _{2}$. The remaining eigenvalues are $\pi^2<\lambda _{3}<\lambda _{4}<\ldots$. The eigenfunctions are $y_{0}=1-x$, $y_{2}=\cos{{\pi x}}-\frac{\sin{{\pi x}}}{\pi}$, $y_{n}=\cos{\sqrt{\lambda _{n}}x}-\frac{\sin{\sqrt{\lambda _{n}}x}}{{\sqrt{\lambda _{n}}}}$ $(n\ge 3)$, and the first associated function of $y_{0}$ is $y_{1}=\frac{1}{6}x^3-\frac{1}{2}x^2+C(1-x)$, where $C$ is
a constant.

By formula (\ref{(Tk)}), $T_0=-\frac{14}{45}-\frac{1}{3 \pi^{2}}.$
By (\ref{(2.17.1)}), $$Q_{0}=\frac{-315+\left(-588 C-194\right) \pi^{4}+\left(-630 C-525\right) \pi^{2}}{945 \pi^{4}}.$$ 
Then $C_1=\frac{-315+\left(-588 C-194\right) \pi^{4}+\left(-630 C-525\right) \pi^{2}}{294 \pi^{4}+315 \pi^{2}}$, and therefore, 
$$
y_{1}^{*}=\frac{x^{2} \left(-3+x\right)}{6}-C \left(-1+x\right)
$$
$$+\frac{\left(588 C \pi^{4}+194 \pi^{4}+630 C \pi^{2}+525 \pi^{2}+315\right) \left(-1+x\right)}{294 \pi^{4}+315 \pi^{2}}.
$$
By (\ref{(5.2.1)}), $
\mathfrak{A}(y^{*}_{k+1}) =\frac{\left(196 C-34\right) \pi^{2}+210 C-21}{588 \pi^{2}+630}.
$
Consequently, $\mathfrak{A}(y^{*}_{k+1}) =0$ if and only if $C=\frac{34 \pi^{2}+21}{14 \left(14 \pi^{2}+15\right)}$.
If we use the characteristic function
$$\omega(\lambda)=\left(-\left(\frac{3}{\pi^{2}}+1\right)\cdot \lambda +3\right)\cdot \left(\cos\! \left(\sqrt{\lambda}\right)-\frac{\sin\! \left(\sqrt{\lambda}\right)}{\sqrt{\lambda}}\right)
$$
$$+\lambda \cdot \left(\sqrt{\lambda}\cdot \sin\! \left(\sqrt{\lambda}\right)+\cos\! \left(\sqrt{\lambda}\right)\right),$$
then $\omega^{\prime\prime}(0)=\frac{28 \pi^{2}+30}{15 \pi^{2}}$, $\omega^{\prime\prime\prime}(0)=\frac{-34 \pi^{2}-21}{35 \pi^{2}}$. By Theorem 3.1, $$C=-\frac{\omega^{\prime\prime\prime}(\lambda_k)}{3\omega^{\prime\prime}(\lambda_k)}=\frac{34 \pi^{2}+21}{196 \pi^{2}+210},$$which confirms the above result.

We provide 2 more examples but for the triple eigenvalue case.

\subsection*{Example 4.}Consider the problem  
$$
-y^{\prime \prime }=\lambda y,\ 0<x<1,
$$
$$
y(0)=0,\ 
(3{\lambda}+\pi^2)y(1)= 2({\lambda }-\pi^2)y^{\prime }(1).
$$
The solution of problem $-y^{\prime \prime }=\lambda y$ with boundary conditions $y(0)=0$, $y^\prime(0)=1$ is $y({x,\lambda)}=\frac{1}{\sqrt{\lambda}}\sin{\sqrt{\lambda}x}$. Therefore, $y^\prime({x,\lambda)}=\cos{\sqrt{\lambda}x}$, and
the characteristic function is $$\omega(\lambda) = (3{\lambda}+\pi^2)\frac{\sin{\sqrt{\lambda}}}{{\sqrt{\lambda}}} -2({\lambda }-\pi^2)\cos{\sqrt{\lambda}}
$$
$$
=-\frac{(3+4\pi^{2})(\lambda-\pi^{2})^{3}}{24\pi^{4}}
+\frac{(15+8\pi^{2})(\lambda-\pi^{2})^{4}}{96\pi^{6}}
$$
$$
-\frac{(315+90\pi^{2}-8\pi^{4})(\lambda-\pi^{2})^{5}}{1920\pi^{8}}
+O\!\left((\lambda-\pi^{2})^{6}\right).
$$
Consequently,
$\omega(\pi^2)=\omega^\prime(\pi^2)=\omega^{\prime\prime}(\pi^2)=0$, $\omega^{\prime\prime\prime}(\pi^2)=\frac{-4 \pi^{2}-3}{4 \pi^{4}}$, $\omega^{IV}(\pi^2)=\frac{8 \pi^{2}+15}{4 \pi^{6}}$, and $\omega^{V}(\pi^2)=\frac{8 \pi^{4}-90 \pi^{2}-315}{16 \pi^{8}}$.
Finally, $\lambda _{0}=\lambda _{1}=\lambda _{2}=\pi^2$ is the triple eigenvalue such that $\lambda _{0}=-\frac{d}{c}=\pi^2$. All other eigenvalues $\lambda _{3}<\lambda _{4}<\ldots$ are real and simple.
The eigenfunctions are $y_{0}=\frac{\sin{\pi x}}{\pi}$ and $y_{n}=\frac{\sin{\sqrt{\lambda_n}x}}{{\sqrt{\lambda_n}}}$ $(n\ge 3)$.
To find the first associated function of $y_{0}$ we need to calculate the limit
$$\tilde{y}_1=\lim_{\lambda\to \pi^2}y_\lambda(x,\lambda)=\frac{x \cos\! \left(\pi  x\right) \pi -\sin\! \left(\pi  x\right)}{2 \pi^{3}}.$$
Similarly, for the second associated function
$$\tilde{y}_2=\frac{1}{2}\lim_{\lambda\to \pi^2}y_{\lambda\lambda}(x,\lambda)=\frac{-\sin\! \left(\pi  x\right) x^{2} \pi^{2}-3 x \cos\! \left(\pi  x\right) \pi +3 \sin\! \left(\pi  x\right)}{8 \pi^{5}}.$$
Thus, the first and second associated functions are 
$$y_{1}=\frac{x \cos\! \left(\pi  x\right) \pi -\sin\! \left(\pi  x\right)}{2 \pi^{3}}+C\cdot\frac{\sin{\pi x}}{\pi},$$and $$y_{2}=\frac{-\sin\! \left(\pi  x\right) x^{2} \pi^{2}-3 x \cos\! \left(\pi  x\right) \pi +3 \sin\! \left(\pi  x\right)}{8 \pi^{5}}
$$
$$+C\cdot\frac{x \cos\! \left(\pi  x\right) \pi -\sin\! \left(\pi  x\right)}{2 \pi^{3}}+D\cdot\frac{\sin{\pi x}}{\pi},$$where $C$ and $D$ are constants. 
In Theorem 3.2, $C=-\frac{\omega^{IV}(\lambda_k)}{4\omega^{\prime\prime\prime}(\lambda_k)}=\frac{8 \pi^{2}+15}{16 \pi^{4}+12 \pi^{2}}$. 
Similarly, in Theorem 3.3, \eqref{(6.25)} is equivalent to 
\begin{equation}
    \begin{aligned}
40\pi^{4}(4\pi^{2}+3)^{2}D={}&
90+640C^{2}\pi^{8}
+\left(960C^{2}-320C+16\right)\pi^{6} \\
&+\left(360C^{2}-840C-8\right)\pi^{4}
+\left(-450C-165\right)\pi^{2}.
\end{aligned} \label{D7.1}
\end{equation}

Let us now approach this example in a different way. By (\ref{(Qk)}), 
$Q_0=\frac{4 \pi^{2}+3}{96 \pi^{6}}$.
The same result is obtained if we  use (\ref{(2.17.1)}). By (\ref{(2.28)}),
${L_0}= \frac{-6+8 C \pi^{4}+\left(6 C-5\right) \pi^{2}}{96 \pi^{8}}$.
Then ${C_2}= -\frac{-6+8 C \pi^{4}+\left(6 C-5\right) \pi^{2}}{\pi^{2} \left(4 \pi^{2}+3\right)}$.
$$y_{1}^\# = y_{1} + C_2 y_0=\frac{\left(9-8 C \pi^{4}+\left(-6 C+6\right) \pi^{2}\right) \sin\! \left(\pi  x\right)+4 \left(\pi^{2}+\frac{3}{4}\right) x \pi  \cos\! \left(\pi  x\right)}{8 \pi^{5}+6 \pi^{3}}.$$
By (\ref{(5.5.1)}),  $ \mathfrak{A}(y^{\#}_{1})=\frac{-15+16 C \pi^{4}+\left(12 C-8\right) \pi^{2}}{64 \pi^{6}+48 \pi^{4}}.$ Therefore  $ \mathfrak{A}(y^{\#}_{1})=0$ if and only if  $C=\frac{8 \pi^{2}+15}{16 \pi^{4}+12 \pi^{2}}$, which coincides with the above result using the previous method.
Similarly, 
\[
\begin{aligned}
y^{*}_{2}
&=y_{2}+C_{2}y_{1}  \\
&=\frac{
\begin{aligned}
&\Bigl(
-15+(-64C^{2}+32D)\pi^{6}
+(-48C^{2}-4x^{2}+56C+24D)\pi^{4} \\
&\qquad
+(-3x^{2}+60C-8)\pi^{2}
\Bigr)\sin(\pi x) \\
&\qquad
-16\left(
-\frac{15}{16}
+C\pi^{4}
+\left(\frac{3C}{4}-\frac12\right)\pi^{2}
\right)x\pi\cos(\pi x)
\end{aligned}
}
{32\pi^{7}+24\pi^{5}}.
\end{aligned}
\]
By (\ref{(5.7)}), 

\[
J_0=
\frac{
\begin{aligned}
&-855
+\left(-3840C^{2}+2560D\right)\pi^{8}
+\left(-5760C^{2}+3200C+3840D-192\right)\pi^{6} \\
&\qquad
+\left(-2160C^{2}+6240C+1440D-704\right)\pi^{4}+\left(2880C-1020\right)\pi^{2}
\end{aligned}
}
{30720\pi^{12}+23040\pi^{10}}.
\]
Then 
\[
\begin{aligned}
D_1
&=-\frac{J_0}{Q_0} \\
&=\frac{
\begin{aligned}
&855+\left(3840C^{2}-2560D\right)\pi^{8}
+\left(5760C^{2}-3200C-3840D+192\right)\pi^{6}\\
&\qquad
+\left(2160C^{2}-6240C-1440D+704\right)\pi^{4}
+\left(-2880C+1020\right)\pi^{2}
\end{aligned}
}
{80\pi^{4}\left(4\pi^{2}+3\right)^{2}}.
\end{aligned}
\]
Consequently, 
\[
\begin{aligned}
y^{\#}_{2}
&=y^{*}_{2}+D_{1}y_{0} \\
&=\frac{
\begin{aligned}
&\sin(\pi x)\Bigl(
405+(1280C^{2}-1280D)\pi^{8} \\
&\qquad
+(1920C^{2}-160x^{2}-960C-1920D+192)\pi^{6} \\
&\qquad
+(720C^{2}-240x^{2}-2160C-720D+384)\pi^{4}\\
&\qquad
+(-90x^{2}-1080C+180)\pi^{2}
\Bigr) \\
&\qquad
-640x\left(
-\frac{15}{16}
+C\pi^{4}
+\left(\frac{3C}{4}-\frac12\right)\pi^{2}
\right) \\
&\qquad\qquad
{}\times\pi\left(\pi^{2}+\frac34\right)\cos(\pi x)
\end{aligned}
}
{80\pi^{5}(4\pi^{2}+3)^{2}}.
\end{aligned}
\]
By (\ref{(5.5.3)}),
\[
\mathfrak{A}(y^{\#}_{2})
=
\frac{
\begin{aligned}
&-90+\left(-640C^{2}+640D\right)\pi^{8}
+\left(-960C^{2}+320C+960D-16\right)\pi^{6} \\
&\qquad
+\left(-360C^{2}+840C+360D+8\right)\pi^{4}
+\left(450C+165\right)\pi^{2}
\end{aligned}
}
{160\pi^{6}\left(4\pi^{2}+3\right)^{2}}.
\]
Therefore, $ \mathfrak{A}(y^{\#}_{2})=0$ if and only if (\ref{D7.1}), which confirms the earlier result.

\subsection*{Example 5.}Consider the problem  
$$
-y^{\prime \prime }=\lambda y,\ 0<x<1,
$$
$$
y^\prime(0)=0,\ 
(12{\lambda}+\pi^2)y(1)= (8{\lambda }-2\pi^2)y^{\prime }(1).
$$
The solution of problem $-y^{\prime \prime }=\lambda y$ with boundary conditions $y(0)=1$, $y^\prime(0)=0$ is $y({x,\lambda)}=\cos{\sqrt{\lambda}x}$. Therefore, $y^\prime({x,\lambda)}=-{\sqrt{\lambda}}\sin{\sqrt{\lambda}x}$, and
the characteristic function is $$\omega(\lambda) = (12{\lambda}+\pi^2)\cos{\sqrt{\lambda}}+(8{\lambda }-2\pi^2){\sqrt{\lambda}}\sin{\sqrt{\lambda}}
$$
$$
=
-\frac{4(3+\pi^{2})}{3\pi^{3}}
\left(\lambda-\frac{\pi^{2}}{4}\right)^{3}
+\frac{12}{\pi^{5}}
\left(\lambda-\frac{\pi^{2}}{4}\right)^{4}
$$
$$+\frac{2(-270+15\pi^{2}+\pi^{4})}{15\pi^{7}}
\left(\lambda-\frac{\pi^{2}}{4}\right)^{5}
+O\!\left(\left(\lambda-\frac{\pi^{2}}{4}\right)^{6}\right).
$$
Consequently,
$\omega\left(\frac{\pi^2}{4}\right)=\omega^\prime\left(\frac{\pi^2}{4}\right)=\omega^{\prime\prime}\left(\frac{\pi^2}{4}\right)=0$, $\omega^{\prime\prime\prime}\left(\frac{\pi^2}{4}\right)=\frac{-8 \pi^{2}-24}{\pi^{3}}$, $\omega^{IV}\left(\frac{\pi^2}{4}\right)=\frac{288}{\pi^{5}}$, and $\omega^{V}\left(\frac{\pi^2}{4}\right)=\frac{16 \pi^{4}+240 \pi^{2}-4320}{\pi^{7}}$.
Finally, $\lambda _{0}=\lambda _{1}=\lambda _{2}=\frac{\pi^2}{4}$ is the triple eigenvalue such that $\lambda _{0}=-\frac{d}{c}=\frac{\pi^2}{4}$. All other eigenvalues $\lambda _{3}<\lambda _{4}<\ldots$ are real and simple.
The eigenfunctions are $y_{0}=\cos\frac{\pi}{2}x$ and $y_{n}=\cos{\sqrt{\lambda_n}x}$ $(n\ge 3)$.
To find the first associated function of $y_{0}$ we need to calculate the limit
$$\tilde{y}_1=\lim_{\lambda\to 0}y_\lambda(x,\lambda)=-\frac{ x}{\pi}\sin \! \frac{\pi  x}{2}.$$
Similarly, for the second associated function
$$\tilde{y}_2=\frac{1}{2}\lim_{\lambda\to 0}y_{\lambda\lambda}(x,\lambda)=-\frac{\left(x \cos\! \left(\frac{\pi  x}{2}\right) \pi -2 \sin\! \left(\frac{\pi  x}{2}\right)\right) x}{2\pi^{3}}.$$
Thus, the first and second associated functions are 
$$y_{1}=-\frac{ x}{\pi}\sin \! \frac{\pi  x}{2}+C\cdot\cos\frac{\pi}{2}x,$$and $$y_{2}=-\frac{\left(x \cos\! \left(\frac{\pi  x}{2}\right) \pi -2 \sin\! \left(\frac{\pi  x}{2}\right)\right) x}{2\pi^{3}}-C\cdot\frac{ x}{\pi}\sin \! \frac{\pi  x}{2}+D\cdot\cos\frac{\pi}{2}x,$$where $C$ and $D$ are constants. 
By Theorem 3.2,
$C=-\frac{\omega^{IV}(\lambda_k)}{4\omega^{\prime\prime\prime}(\lambda_k)}=\frac{9}{\pi^{2} \left(\pi^{2}+3\right)}$. 
Similarly, in Theorem 3.3,
 \eqref{(6.25)} is equivalent to 
\begin{equation}
 D=\frac{10 C^{2} \pi^{6}+\left(60 C^{2}+1\right) \pi^{4}+\left(90 C^{2}-90 C+18\right) \pi^{2}-270 C-225}{10 \pi^{2} \left(\pi^{2}+3\right)^{2}}. \label{D7}   
\end{equation}
Now we will show an alternative approach to this example. By (\ref{(Qk)}), 
$Q_0=\frac{\pi^{2}+3}{6 \pi^{4}}$.
The same result is obtained if we  use (\ref{(2.17.1)}). By (\ref{(2.28)})
${L_0}= \frac{-12+2 C \pi^{4}+\left(6 C-1\right) \pi^{2}}{6 \pi^{6}}$.
Then ${C_2}=-\frac{L_0}{Q_0}=-\frac{-12+2 C \pi^{4}+\left(6 C-1\right) \pi^{2}}{\pi^{2} \left(\pi^{2}+3\right)}$.
$$y_{1}^\# = y_{1} + C_2 y_0=\frac{\left(12-C \pi^{4}+\left(-3 C+1\right) \pi^{2}\right) \cos\! \left(\frac{\pi  x}{2}\right)-\sin\! \left(\frac{\pi  x}{2}\right) x \pi  \left(\pi^{2}+3\right)}{\pi^{2} \left(\pi^{2}+3\right)}.$$
By (\ref{(5.5.1)}),  $ \mathfrak{A}(y^{\#}_{1})=\frac{C \pi^{4}+3 C \pi^{2}-9}{8 \pi^{3} \left(\pi^{2}+3\right)}.$ Therefore  $ \mathfrak{A}(y^{\#}_{1})=0$ if and only if  $C=\frac{9}{\pi^{2} \left(\pi^{2}+3\right)}$, which coincides with the above result using the previous method.
Similarly,

\[
\begin{aligned}
y_{2}^{*}
&= y_{2}+C_{2}y_{1} \\
&=
\frac{-2\pi}{\pi^{3}(\pi^{2}+3)}
\begin{aligned}[t]
\Bigl(&\left(C^{2}-\frac{D}{2}\right)\pi^{4}
+\left(3C^{2}+\frac{x^{2}}{4}-\frac{C}{2}-\frac{3D}{2}\right)\pi^{2} \\
&\qquad
+\frac{3x^{2}}{4}-6C\Bigr)
\cos\!\left(\frac{\pi x}{2}\right)\\
&\qquad
+\frac{\sin\! \left(\frac{\pi  x}{2}\right) x \left(C \pi^{4}+3 C \pi^{2}-9\right)}{\pi^{3} \left(\pi^{2}+3\right)}.
\end{aligned}
\end{aligned}
\]
By (\ref{(5.7)}), 
$${J_0}= \frac{-135+\left(-15 C^{2}+10 \mathit{D}\right) \pi^{8}+\left(-90 C^{2}+10 C+60 \mathit{D}-3\right) \pi^{6}}{30 \pi^{8} \left(\pi^{2}+3\right)}$$

$$+\frac{\left(-135 C^{2}+150 C+90 \mathit{D}-24\right) \pi^{4}+\left(360 C+45\right) \pi^{2}}{30 \pi^{8} \left(\pi^{2}+3\right)}.$$
Then 
$$D_1=-\frac{J_0}{Q_0}=\frac{135+\left(15 C^{2}-10 \mathit{D}\right) \pi^{8}+\left(90 C^{2}-10 C-60 \mathit{D}+3\right) \pi^{6}}{5 \pi^{4} \left(\pi^{2}+3\right)^{2}}$$

$$+\frac{\left(135 C^{2}-150 C-90 \mathit{D}+24\right) \pi^{4}+\left(-360 C-45\right) \pi^{2}}{5 \pi^{4} \left(\pi^{2}+3\right)^{2}}.$$
Consequently, 
\[y^{\#}_{2}=y^{*}_{2}+D_{1}y_{0}=\frac{10 \pi  \sin\! \left(\frac{\pi  x}{2}\right) x  \left(C \pi^{4}+3 C \pi^{2}-9\right)}{10 \pi^{4} \left(\pi^{2}+3\right)}\]

\[
+\frac{
\begin{aligned}
\Bigl(&270+(10C^{2}-10D)\pi^{8}
+(60C^{2}-5x^{2}-10C-60D+6)\pi^{6}\\
&+(90C^{2}-30x^{2}-150C-90D+48)\pi^{4}
+(-45x^{2}-360C-90)\pi^{2}\Bigr)
\cos\!\left(\frac{\pi x}{2}\right)
\end{aligned}
}
{10\pi^{4}(\pi^{2}+3)^{2}}
.\]
By (\ref{(5.5.3)}),
$$\mathfrak{A}(y^{\#}_{2})=\frac{\left(-10 C^{2}+10 \mathit{D}\right) \pi^{6}+\left(-60 C^{2}+60 \mathit{D}-1\right) \pi^{4}}{80 \pi^{3} \left(\pi^{2}+3\right)^{2}}$$

$$+\frac{\left(-90 C^{2}+90 C+90 \mathit{D}-18\right) \pi^{2}+270 C+225}{80 \pi^{3} \left(\pi^{2}+3\right)^{2}}.$$
Therefore  $ \mathfrak{A}(y^{\#}_{2})=0$ if and only if (\ref{D7}), confirming the earlier result.

\subsection*{Acknowledgment}
This work was financially supported by ADA University and Baku State University.

\section*{Declarations}

\noindent\textbf{Ethics approval and consent to participate.}
Not applicable.

\noindent\textbf{Ethics declaration} Not applicable.

\noindent\textbf{Ethics, Consent to Participate, and Consent to Publish declarations.} Not applicable.

\noindent\textbf{Consent for publication.}
Not applicable.

\noindent\textbf{Competing interests.}
The authors declare that they have no competing interests.

\noindent\textbf{Authors' contributions.}
Both authors contributed equally to the conception, analysis, and writing of the manuscript. Both authors read and approved the final manuscript.

\noindent\textbf{Funding.}
This work was supported by the ADA University Faculty Research and Development Fund and Baku State University.

\noindent\textbf{Availability of data and materials.}
No datasets were generated or analysed during the current study. Data sharing is not applicable to this article.

\noindent\textbf{Acknowledgements.}
Not applicable.


\end{document}